\documentclass[11pt]{article}

\usepackage[a4paper,total={7in,10.5in}]{geometry}
\usepackage[indent=16pt]{parskip}

\usepackage[T1]{fontenc}
\usepackage[utf8]{inputenc}
\usepackage[english]{babel}

\usepackage{epsfig}
\usepackage{epstopdf}

\usepackage{amsmath}
\usepackage{amsthm}
\usepackage{amsfonts}
\usepackage{amssymb}

\usepackage{tikz-cd}

\usepackage{xfrac}
\usepackage{hyperref}
\usepackage{comment}
\usepackage{ulem}
\usepackage{caption}
\usepackage{subcaption}
\usepackage{array}
\usepackage{ragged2e}
\usepackage{graphbox}

\renewcommand{\P}{{\mathbb P}}
\newcommand{\C}{{\mathbb C}}

\newcommand{\N}{{\mathbb N}}
\newcommand{\Q}{{\mathbb Q}}

\newcommand{\QQ}{{\overline {\mathbb Q}}}

\newcommand{\Gal}{\mathrm{Gal}}

\newcommand{\Sym}{\mathrm{\bf S}}

\newcommand{\sep}{\mathop{|}}
\renewcommand{\pmod}[1]{\ (\mathrm{mod}\ #1)}
\renewcommand{\le}{\leqslant}
\renewcommand{\ge}{\geqslant}

\newtheorem{theorem}{Theorem}[section]

\newtheorem{proposition}[theorem]{Proposition}
\newtheorem{conjecture}[theorem]{Conjecture}
\theoremstyle{definition}

\newtheorem{example}[theorem]{Example}
\newtheorem{remark}[theorem]{Remark}
\newtheorem{algorithm}[theorem]{Algorithm}

\usepackage{tcolorbox}
\usepackage{xcolor}

\usepackage{tcolorbox}
\usepackage{xcolor}

\newtcolorbox{webquote}{
  colback=black!3,       
  colframe=gray!60,      
  leftrule=4pt,          
  rightrule=0pt,         
  toprule=0pt,           
  bottomrule=0pt,        
  sharp corners,         
  left=12pt,             
  right=12pt,            
  top=8pt,               
  bottom=8pt,            
  before=\vspace{10pt},  
  after=\vspace{10pt}    
}

\begin{document}

\title{Almost Regular Coverings of the Sphere:
\\
Realizability. I. Tetrahedral Case}
\author{
Nikolai M. Adrianov\footnote{
Lomonosov Moscow State University, 119991 Moscow, Russia,
e-mail: {\tt nadrianov@gmail.com}},    
Elena M. Kreines\footnote{Department of Mathematics, Faculty of Natural Sciences, Ben-Gurion University of the Negev, P.O.B. 653, Beer Sheva, 8410501, Israel,
e-mail: {\tt kreines@tauex.tau.ac.il}}
}

\maketitle

\begin{abstract}
We prove the realizability of genus-$0$ branch data of the form $(2^r,a \sep 3^s,b \sep 3^t,c)$ and $(2^r,a \sep 3^s \sep 3^t,b,c)$, where $a$, $b$, $c$ are not divisible by $2$, $3$, $3$ respectively. The proof uses an explicit combinatorial description of coverings of the sphere branched over $3$ points via dessins d'enfants. As a corollary, we establish realizability for a broader class of branch data with more critical values.
\end{abstract}

\section{Introduction}

\subsection{Partitions}

For a positive integer $n\in\N$, a tuple
$$
\lambda=(a_1,a_2,\ldots, a_m), \qquad \sum_{i=1}^m a_i = n,\quad a_i\in\N
$$
is called a {\it partition} of $n$, written $\lambda\vdash n$. The number of parts $m$ is the {\it length} of the partition, denoted $\ell(\lambda)$.

Parts are often assumed to be arranged in non-increasing order $a_i\ge a_{i+1}$, but we do not impose this convention here. When many parts are equal, we use exponential notation; for example,
$$
(\underbrace{3,\ldots, 3}_s,5,2,2) = (3^s, 5, 2^2).
$$

\subsection{The Hurwitz Realizability Problem}

Let $f:\,X\to\P^1(\C)$ be a branched covering of the Riemann sphere of degree $n$, branched over $q$ points $x_1, \ldots, x_q\in\P^1(\C)$. The ramification over each point $x_i$ defines a partition $\lambda_i\vdash n$, so we associate to $f$ its branch datum
\begin{equation}
\pi(f)=(\lambda_1 \sep \lambda_2 \sep \ldots \sep \lambda_q),
\qquad
\lambda_i\vdash n,
\quad
\lambda_i\ne (1^n).
\label{eq:passport}
\end{equation}

The genus $g=g(X)$ of the covering is determined by the Riemann--Hurwitz formula:
\begin{equation}
\sum_{i=1}^q \ell(\lambda_i) - (q-2)n = 2 - 2g.
\label{eq:RH}
\end{equation}

A collection of partitions $\pi$ satisfying~(\ref{eq:RH}) for some integer $g\ge 0$ is called a {\it branch datum of genus $g$}.
A covering $f:\,X\to\P^1(\C)$ of genus $g$ whose branch datum equals $\pi$ is called a {\it realization} of $\pi$.
The Riemann--Hurwitz formula is a necessary but not sufficient condition for realizability. The realizability problem has a rich literature; we highlight the key paper of Edmonds, Kulkarni, and Stong~\cite{EKS-1984} and the computational results of Zheng~\cite{Zheng-2006}. For a detailed survey of the Hurwitz problem, see~\cite{Petr-2020} and the introduction to~\cite{Pak-2024}. Necessary conditions for realizability of various special families of branch data have been found, and in some cases their sufficiency has been proved; see~\cite{
BarPetr-2024,
KLN-2018,
MengWeiZhou-2026,
Pak-2009,
PakZvo-2014,
PascPetr-2012,
PervPetr-2006,
PervPetr-2008,
Thom-1965,
WeiWuXu-2024}.

In the present paper we prove the realizability of an extremal family of genus-$0$ branch data, providing a general scheme for a combinatorial construction via dessins d'enfants that can be applied to prove realizability of other related families as well.

The main result of this paper is the following theorem. Since almost all ramification indices in the first three components of the branch data equal $2$, $3$, and $3$, we call such branch data {\it almost regular of $(2,3,3)$ type} (the tetrahedral case). In subsequent papers we will prove analogous results for the octahedral $(2,3,4)$ and icosahedral $(2,3,5)$ types, and describe some new non-realizable families of genus-$0$ branch data.

\begin{theorem}
\label{thm:main}
Let a genus-$0$ branch datum have one of the following forms:
\begin{subequations}
\begin{equation}
\pi = (2^r,a \sep 3^s, b \sep 3^t,c \sep \lambda_4 \sep \ldots \sep \lambda_q),
\label{eq:passport-wide-233-a}
\end{equation}
or
\begin{equation}
\pi = (2^r,a \sep 3^s \sep 3^t,b,c \sep \lambda_4 \sep \ldots \sep \lambda_q),
\label{eq:passport-wide-233-b}
\end{equation}
\end{subequations}
where $2\nmid a$, $3\nmid b$, $3\nmid c$, and $\lambda_i$ are arbitrary partitions.
Then the branch datum $\pi$ is realizable.
\end{theorem}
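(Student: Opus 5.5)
The plan is to reduce everything to the three-point case $q=3$ and then build explicit dessins. First the general case. Given $\pi$ with $q>3$, merge the extra partitions $\lambda_4,\ldots,\lambda_q$ into the first three components, using the standard degeneration/composition argument. Concretely, a realizable datum $(\mu_1\sep\mu_2\sep\mu_3)$ of genus $0$, together with a suitable cyclic "splitting" of one of its components, yields realizations with more critical values.

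Rather than splitting, it seems cleaner to induct on $q$ as follows. Given a realization $f$ of $(\lambda_1\sep\lambda_2\sep\lambda_3)$-type data, one can postcompose nothing. Instead we note that the Riemann--Hurwitz formula forces the total defect $\sum_{i\ge4}(n-\ell(\lambda_i))$ to be absorbed by making $r,s,t$ smaller. Writing $v(\lambda)=n-\ell(\lambda)$, the datum $\pi$ has $v_1+v_2+v_3+\sum_{i\ge 4} v_i=2n-2$. So I would deduce realizability for $q\ge4$ from the $q=3$ case with exceptional parts allowed to be more general. I would do this by a gluing/``blowing up'' of a vertex of the dessin: one realizes the three-point datum with the short components replaced, then deforms by splitting a critical value. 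I expect this reduction to be the subtle bookkeeping step, and I would try first to phrase it as: a genus-$0$ datum $(\mu\sep\nu\sep\lambda_4\sep\dots)$ is realizable whenever $(\mu\sep\nu\cdot\lambda\sep\dots)$ is, via a monodromy product $\sigma_2=\tau_2\tau_4$ with prescribed cycle types.

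The core is the case $q=3$: $\pi=(2^r,a\sep 3^s,b\sep 3^t,c)$ or $(2^r,a\sep 3^s\sep 3^t,b,c)$ with the divisibility conditions, genus $0$. First I would analyse Riemann--Hurwitz. In case (a), $n=2r+a=3s+b=3t+c$, and the genus-$0$ condition gives $\tfrac a2+\tfrac{2b}{3}+\tfrac{2c}{3}-\tfrac12$-type linear relations. These force $a,b,c$ into a small number of residue families, with $a\equiv1\pmod 2$ and $b,c\in\{1,2\}\pmod 3$. Then I would construct, for each family, an explicit planar bipartite map. The black vertices have degree $2$ except one of degree $a$, the white vertices have degree $3$ except one of degree $b$, and the faces have degree $3$ except one of degree $c$. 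Equivalently it is a tree-like ``tetrahedral'' pattern: a chain of triangular faces, built as a strip of the regular $(2,3,3)$ tessellation winding around the exceptional vertex and face. I would give the construction as a base dessin plus an insertion move. The move adds one black vertex of degree $2$, one white vertex of degree $3$ and one face of degree $3$ per step, or alternatively increases $a,b,c$ by $6,3\cdot$ etc. This allows induction on $n$ within each residue class.

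The main obstacle is producing this explicit family: it must cover all admissible $(a,b,c)$, including the extreme cases where some of $r,s,t$ are $0$ or the exceptional degree is $1$. It must also keep planarity, i.e.\ connectivity with the correct face count. I would first try to find, for each residue class of $(a \bmod 6, b\bmod 3, c\bmod 3)$, a minimal dessin by hand or computer search. I would then verify that a fixed local insertion near the exceptional face preserves all degree conditions. Case (b) is handled the same way, with the two exceptional white vertices placed on opposite ends of the strip.
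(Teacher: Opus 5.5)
Your overall architecture matches the paper's: induction on $q$ by splitting one critical value, then explicit dessins for $q=3$. But neither half contains the idea that makes it work.

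\emph{The reduction.} The merged datum must stay inside the family covered by the induction hypothesis. You instead propose to use the $q=3$ case ``with exceptional parts allowed to be more general'', which is not available. Also, $\nu\cdot\lambda$ is not a well-defined partition: the existence of a genus-preserving factorization is precisely what must be proved. The paper merges $\lambda_q$ into the \emph{first} component.
\begin{itemize}
\item With $m=r+\ell(\lambda_q)-n$, replace $(2^r,a)$ by $(2^m,n-2m)$. Since $n=2r+a$ is odd, $n-2m$ is odd, so $\pi'=(2^m,n-2m\sep\lambda_2\sep\lambda_3\sep\lambda_4\sep\ldots\sep\lambda_{q-1})$ is again of form (a) or (b), and a direct count shows it has genus $0$.
\item Riemann--Hurwitz together with $s+t\le(2n-2)/3$ and $r<n/2$ gives $0<m<m_1/2$, where $m_1$ is the number of parts equal to $1$ in $\lambda_q$. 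Hence $\lambda_q=\lambda'\cup(1^{2m})$.
\item Write $g_1=(1,\ldots,n-2m)\,\omega_1\cdots\omega_m$. Factor the long cycle as $h_1h_2$ with $c(h_1)=\lambda'$ and $c(h_2)=(2^{r-m},a)$, using a plane tree that realizes the genus-$0$ polynomial datum $(\lambda'\sep 2^{r-m},a\sep n-2m)$. Then replace $g_1$ by the pair $h=h_2\omega_1\cdots\omega_m$ and $h_1$.
\end{itemize}
This fixed-point count and the plane-tree factorization are exactly what your sketch leaves out.

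\emph{The case $q=3$.} Your plan fails as stated, for two reasons.
\begin{itemize}
\item Riemann--Hurwitz gives $n=3a+2b+2c-6$. With $a=2u+1$, $b=3v+1,2$, $c=3w+1,2$ this reads $u+v+w=(n-1)/6$, $(n-5)/6$ or $(n-3)/6$. So $n$ is determined by the exceptional degrees, and no move can add edges while keeping them fixed. Your move adding ``one black vertex of degree $2$, one white vertex of degree $3$ and one face of degree $3$'' is not even consistent with edge counting.
\item Every triple of non-negative integers $(u,v,w)$ occurs, so each residue class is a three-parameter family. One base dessin with one iterated insertion yields only a one-parameter family.
\end{itemize}
The paper's constructions are genuinely multi-parameter. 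They use terminals with repeated tetrahedral blocks ($A^k_i$, $B^k_i$, $F^k_i$) joined by chains of connectors $C$, each block changing $n$ by $12$.

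Crucially, the shape of the dessin changes according to whether $(u,v,w)$ satisfies the triangle inequalities with respect to the integer semiperimeter $p$. This gives cases (T), (NT-3) and (NT-2). The large-$a$ case (NT-2) is obtained only through the dual passport $(3^s,b\sep 3^t,c\sep 2^r,a)$. That dual is built from terminals $D^k_q$ and connectors $E^k$, including an overlapping $E^{-1}$ configuration for $n\equiv 11\pmod{12}$.

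Without explicit constructions covering every admissible $(u,v,w)$, including the boundary cases you flag yourself, the $q=3$ theorem remains unproved, and it carries all the content.
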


\subsection{Dessins d'Enfants}

A covering $\beta:\,X\to\P^1(\C)$ branched over exactly $q=3$ points is called a {\it Belyi cover} (or {\it Belyi function}).

Choose any two of the three critical values, say $c_0,c_1\in \P^1(\C)$. The preimage of the arc connecting them yields a connected bicolored graph $\Gamma=\beta^{-1}[c_0,c_1]$ on the surface $X$ such that the complement $X\setminus \Gamma$ is a disjoint union of discs (faces). In other words, the pair $(X,\Gamma)$ is a bicolored map on a compact oriented surface. The preimages of $c_0$ are colored white, and those of $c_1$ are colored black.

We call the bicolored map $(X,\Gamma)$ a {\it dessin d'enfant}, a term coined by Grothendieck~\cite{Gro-1984}.
The use of this term, rather than the more prosaic ``map'', is motivated by a theorem of Belyi~\cite{Bel-1979}: the surface $X$ can be interpreted as a nonsingular complex algebraic curve defined over a number field (a finite extension of $\Q$). Conversely, every such curve admits a Belyi function, so dessins d'enfants provide a complete combinatorial encoding of curves over number fields. Furthermore, there is a faithful action of the absolute Galois group $\Gal(\QQ/\Q)$ on dessins d'enfants. In the present paper, however, we restrict ourselves to the combinatorial aspects and do not pursue arithmetic questions.
The theory of dessins d'enfants is a well-developed subject; for a detailed and accessible introduction we recommend the monograph~\cite{LanZvo-2004}.

The degree of a vertex or face equals the ramification index of the covering at that point. The {\it degree of a vertex} is the number of edges incident to it. The definition of face degree requires some care. Each face of a dessin can be represented as a polygon with an even number of sides whose vertices alternate in color, so the {\it degree of a face} is defined as {\it half} the number of edges incident to it.

In the theory of dessins d'enfants, the collection of vertex and face degrees is traditionally called the {\it passport}. In this paper we use the term ``passport'' as a synonym for ``branch datum of width $q=3$'' when speaking of a dessin.

The ordered pair of critical values can be chosen in $6$ ways (reflecting the action of $\Sym_3$ on the three critical points), so each Belyi cover can be depicted by $1$, $2$, $3$, or $6$ different dessins. With a slight abuse of terminology, we say that these dessins are {\it dual} to one another.
The passports of dual dessins are obtained as all possible permutations of the passport components:
\begin{center}
\begin{tabular}{lclcl}
$(\lambda_1 \sep \lambda_2 \sep \lambda_3)$, &&
$(\lambda_2 \sep \lambda_3 \sep \lambda_1)$, &&
$(\lambda_3 \sep \lambda_1 \sep \lambda_2)$, \\
$(\lambda_1 \sep \lambda_3 \sep \lambda_2)$, &&
$(\lambda_3 \sep \lambda_2 \sep \lambda_1)$, &&
$(\lambda_2 \sep \lambda_1 \sep \lambda_3)$. \\
\end{tabular}
\end{center}

In this paper we restrict to the case $g(X)=0$, so we work with connected graphs embedded in the sphere. Via stereographic projection we may view them as planar graphs, but a further choice remains: which face (or vertex) to place at $\infty\in\P^1(\C)$. We usually take the face of largest degree as the ``outer'' face, though this is not uniquely determined when several faces share the maximum degree, and we may depart from this convention for other reasons.

\begin{example}
Figure~\ref{fig:dessin-ex}$(a)$ shows a genus-$0$ dessin, one of two realizations of the passport $(2^3,1 \sep 3^2,1 \sep 6,1)$; the second realization is its mirror image. When almost all white vertices have degree $2$, we suppress them in the picture, see Fig.~\ref{fig:dessin-ex}$(b)$. Nevertheless, our dessins are always bicolored: if an edge appears to connect two black vertices, it is understood that this edge carries a white vertex of degree $2$.
\end{example}

\begin{figure}[h]
\centering
\begin{subfigure}[b]{0.25\textwidth}
\centering
\includegraphics{images/wt-behold-10.mps}
\caption*{$(a)$}
\end{subfigure}
\begin{subfigure}[b]{0.25\textwidth}
\centering
\includegraphics{images/wt-behold-11.mps}
\vspace{0.065cm}
\caption*{$(b)$}
\end{subfigure}
\caption{A 7-edge dessin with passport $(2^3,1 \sep 3^2,1 \sep 6,1)$.}
\label{fig:dessin-ex}
\end{figure}

Plane trees, i.e.\ dessins with a single face, form a special subclass of genus-$0$ dessins. The corresponding Belyi functions are polynomials (so-called Shabat polynomials). For plane trees, the Hurwitz realizability problem has a trivial answer: every plane tree passport is realizable. This follows from Tutte's explicit formula~\cite{Tutte-1964} for the number of plane trees with a given passport, and is also a special case of Thom's result~\cite{Thom-1965} on the realizability of branch data of polynomials.

The following theorem is the restriction of Theorem~\ref{thm:main} to the case of three critical values.

\begin{theorem}
\label{thm:main-dessin}
\begin{subequations}
For every genus-$0$ passport of the form
\begin{equation}
\pi = (2^r,a \sep 3^s, b \sep 3^t,c),
\label{eq:passport-233-a}
\end{equation}
or
\begin{equation}
\pi = (2^r,a \sep 3^s \sep 3^t,b,c),
\label{eq:passport-233-b}
\end{equation}
\end{subequations}
where $2\nmid a$, $3\nmid b$, $3\nmid c$, there exists a dessin d'enfant realizing it.
\end{theorem}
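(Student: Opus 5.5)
We first unpack the Riemann--Hurwitz condition~(\ref{eq:RH}) with $q=3$, $g=0$. For~(\ref{eq:passport-233-a}) with $n=2r+a=3s+b=3t+c$ it reads $(r+1)+(s+1)+(t+1)=n+2$. Substituting $r=(n-a)/2$, $s=(n-b)/3$, $t=(n-c)/3$ gives
$$
\frac{n}{6} + 1 = \frac{a}{2}+\frac{b}{3}+\frac{c}{3} - \ldots,
$$
and more precisely $3a+2b+2c = n+6$. For~(\ref{eq:passport-233-b}) with $n=2r+a=3s=3t+b+c$ the same computation gives $3a+2b+2c=n+12$. Thus in both cases the exceptional parts are large, of total ``weight'' about $n$. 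The passport is ``almost regular'': away from the special vertices and faces, the dessin looks like a piece of the tetrahedral tiling, i.e.\ the $(2,3,3)$ regular map, whose hexagonal-triangle pattern we unfold.

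The construction we would carry out is explicit. We take the picture convention of Fig.~\ref{fig:dessin-ex}$(b)$: suppress the white vertices of degree $2$, so the dessin is a planar map with black vertices of degree $3$ (plus one black vertex of degree $b$ or $b,c$), faces of degree $3$ (plus one special face of degree $c$, or none), and one white vertex of degree $a$, odd. We would first treat the model family with $a=1$: a white vertex of degree $1$ is a pendant half-edge. In this case we build trivalent planar maps with all faces triangles except one face of degree $c$ and one vertex of degree $b$. Concretely, we would take a ``strip'' of triangles (a triangulated annulus or a fan), close it up so that the inner boundary collapses to the special vertex of degree $b$ and the outer boundary becomes the special face of degree $c$. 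We then adjust residues mod~$3$ by inserting a pendant edge or a loop-with-pendant gadget, which realizes the cases $b,c\equiv 1,2 \pmod 3$. For general odd $a$, we would glue in a ``star'' gadget: the white vertex of degree $a$ with $(a-1)/2$ loops around it, each loop bounding a small configuration contributing one triangular face and compensating black degrees. Each gadget increases $a$ by $2$ and changes $n$ by $6$, consistent with $3a+2b+2c=n+6$. The case~(\ref{eq:passport-233-b}) is handled the same way, with two special black vertices of degrees $b,c$ and all faces triangles. By duality (permuting components) one may alternatively view it as a case of the previous shape, and the construction is similar with the special face replaced by the second vertex.

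The steps in order are as follows. (1) Derive the linear relation and reduce to parameters $(a,b,c)$ with $b,c$ in residue classes mod~$3$. (2) Construct base dessins for small $a$ (namely $a=1,3$) and all admissible $b,c$ by induction on $b+c$: a local surgery that increases $b$ (or $c$) by $3$ adds one triangle face and one trivalent vertex, via edge subdivision near the special vertex. (3) Construct the $a$-increasing gadget. (4) Check the base cases, including small degenerate ones, by hand or from tables.

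The main obstacle is that the local moves must preserve planarity and connectivity while keeping all non-special degrees exactly $2$ and $3$. In particular, the interaction between the odd white vertex of degree $a$ and the special vertex or face is delicate when $b$ or $c$ is small (e.g.\ $b=1$ or $c=1,2$), where there is no room to insert gadgets. We expect a finite list of exceptional small cases needing separate explicit pictures. We would first try to make all moves local near a single chosen edge incident to the special vertex, so that they commute with each other.
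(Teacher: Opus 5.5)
\textbf{Verdict: there is a genuine gap.} Your proposal is a plan rather than a proof. The whole content of the theorem is an explicit planar dessin for every admissible $(a,b,c)$, and none of your three ingredients is actually constructed: the base families for $a=1,3$, the surgery raising $b$ or $c$ by $3$, and the gadget raising $a$ by $2$. The claim that only finitely many ``exceptional small cases'' remain is not argued.

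\textbf{Errors in the bookkeeping you do give.}
With $a,c$ fixed, $b\mapsto b+3$ forces $n\mapsto n+6$, so $r,s,t$ grow by $3,1,2$. The move must therefore add one trivalent black vertex, two triangular faces and three degree-$2$ white vertices, not one face and one vertex. Edge subdivision alone only creates vertices of degree $2$. Likewise $a\mapsto a+2$ adds two triangular faces, two trivalent black vertices and two degree-$2$ white vertices, not one triangular face.

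For~(\ref{eq:passport-233-b}) the relation is again $3a+2b+2c=n+6$, not $n+12$; check $(2,1\sep 3\sep 1,2)$ with $n=3$.

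Permuting the components of~(\ref{eq:passport-233-b}) gives two special black vertices and no special face. That is not the shape of~(\ref{eq:passport-233-a}), so ``the same way'' is unsupported. The paper needs extra terminals $F^j_1,F^j_2$ and a separate algorithm for this family.

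\textbf{The heuristic behind your moves is wrong.} Since $\frac12+\frac13+\frac13>1$, the regular $(2,3,3)$ map is just the tetrahedron, so there is no tiling to unfold. Concretely, suppose a black vertex of degree $3$ has only degree-$2$ white neighbours and three triangular faces around it. Then its three black neighbours are pairwise adjacent. If those neighbours are also trivalent with only degree-$2$ white neighbours, the dessin closes up into $K_4$ with $n=12$. So in a dessin of the required form, every trivalent black vertex lies within one step of a special vertex or face. There are no thick strips or annuli of triangles to ``close up''. The regular material the paper actually uses is tetrahedra cut open and chained, namely its connector $C$, each copy adding $12$ edges.

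\textbf{Why local $+6$ moves are doubtful.} The dessins are rigid (conjecturally unique), and their global shape changes with the parameters in two ways:
\begin{itemize}
\item It depends on $n\pmod{12}$. Your moves change $n$ by $6$, swapping the residues $1\leftrightarrow 7$ and $5\leftrightarrow 11$. In the paper this changes the terminal types, e.g.\ $D_0\leftrightarrow D_3$ and $D_8\leftrightarrow D_{11}$.
\item It depends on whether $u=\lfloor a/2\rfloor$, $v=\lfloor b/3\rfloor$, $w=\lfloor c/3\rfloor$ satisfy the triangle inequalities. In the regime $u>v+w$, which repeated use of your $a$-gadget must eventually enter, the paper does not build the dessin directly. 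It constructs the dual dessin from the terminals $D^k_q$ and connectors $E^k$.
\end{itemize}

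\textbf{The missing idea} is this organisation: two terminals joined by a chain of tetrahedral connectors, terminal types fixed by $n\pmod{12}$, and the (T)/(NT-3)/(NT-2) case split. Without some uniform scheme of this kind covering all regimes, your induction has no steps known to exist.
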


In Section~\ref{section:reduction} we reduce Theorem~\ref{thm:main} to the case $q=3$, and then, in Sections~\ref{section:realize-a} and \ref{section:realize-b}, describe a combinatorial construction that, given a passport of the form~(\ref{eq:passport-233-a}) or~(\ref{eq:passport-233-b}), produces a dessin realizing it.

\subsection{Uniqueness of the Dessin}

For $q=3$ it is natural to ask how many realizations a given passport admits.
The following conjecture is supported by our computer calculations.

\begin{conjecture}
\label{conj:main}
For each passport in the statement of Theorem~$\ref{thm:main-dessin}$, the realization as a dessin d'enfant is unique.
\end{conjecture}

Thus, the families of dessins constructed in this paper extend results previously obtained for plane trees~\cite{Adr-2007} and weighted trees~\cite{PakZvo-2014}. Note the structural similarity between our dessins and the weighted trees of series $C$ and $E$ in the classification of weighted unitrees due to Pakovich and Zvonkin; see~\cite{PakZvo-2014} or \cite[Ch.~5]{AdrPakZvo-2020}.

Although we do not compute the Belyi functions explicitly and do not address the action of the Galois group on dessins d'enfants, we remark that Conjecture~\ref{conj:main} would imply that the corresponding Belyi covers can be defined over $\Q$. Furthermore, one can show that these functions cannot be expressed as nontrivial compositions. Compositionally irreducible Belyi functions over $\Q$ are interesting and rather rare objects.

\section{Proofs without Words: Historical Remarks}

\subsection{Bh\=askara: Behold!}

A popular story, found in some histories of mathematics and widely repeated in mathematical folklore, has it that the Indian mathematician Bh\=askara~II (12th century AD) proved the Pythagorean theorem in his treatise Bijaganita using a single diagram accompanied by the single word ``Behold!''; see~\cite{Caj-1910}:

\begin{webquote}
Arranging this square and the four triangles in a different way, they are seen, together,
to make up the sum of the square of the two sides. ``Behold!'' says Bhaskara,
without adding another word of explanation.
\end{webquote}

In fact, this is
something of an exaggeration: Bh\=askara uses the word much as a modern author uses ``see'' to refer the reader to an illustrating figure; see~\cite{Plo-2007}, p.~477, or~\cite{Caj-1910}, p.~333, Appendix~11 of the Russian edition of 1910.

Nevertheless, visual proofs without words are immensely attractive, and the beautiful legend gives them a perfect setting.

\subsection{Visual Proofs via Dessins d'Enfants}

A striking example of a Behold!-style proof using dessins d'enfants was given by Pakovich and Zvonkin; see~\cite{PakZvo-2014} or~\cite[Ch.~2]{AdrPakZvo-2020}.

The conjecture on the existence of coprime polynomials $A,B\in\C[z]$ with $\deg A=2k$, $\deg B=3k$ such that $\deg (A^3-B^2) = k+1$, for all $k\in \N$, was stated by Birch, Chowla, Hall, and Schinzel~\cite{BCHS-1965} in 1965; its proof was given by Stothers~\cite{Sto-1981} in 1981 using group-theoretic arguments.

\begin{figure}[h]
\centering
\begin{subfigure}[b]{0.4\textwidth}
\centering
\includegraphics{images/wt-behold-1.mps}
\vspace{0.41cm}
\caption*{$(a)$ Take a plane tree with $k+1$ leaves and $k-1$ internal vertices of degree $3$.}
\end{subfigure}
\hspace{0.6cm}
\begin{subfigure}[b]{0.45\textwidth}
\centering
\includegraphics{images/wt-behold-2.mps}
\caption*{$(b)$ Attach a loop to each leaf, obtaining a dessin with $2k$ black vertices of degree $3$.}
\end{subfigure}
\caption{The Pakovich--Zvonkin proof of the theorem on the minimum degree of $A^3-B^2$.}
\label{fig:DZ-dessin}
\end{figure}

Several pages of Stothers' proof can be replaced by a single picture, Fig.~\ref{fig:DZ-dessin}. The Belyi function of the dessin on the right has the form $\beta = A^3/R = B^2/R + 1$, and the polynomial $R=A^3-B^2$ has degree $k+1$, where $k+1$ is the number of loops in the dessin. For any given $k$, such a dessin exists and the construction is clear from the picture. Behold!

In fact, the solution of the $A^3-B^2$ conjecture over $\C$ is equivalent to the realizability of branch data of the form
$$
\left(2^{3k} \sep 3^{2k} \sep (5k-1),1^{k+1}\right).
$$

Our proof of Theorem~\ref{thm:main-dessin} is more verbose, but in essence it consists of Figures~\ref{fig:233-generic-T}--\ref{fig:233-generic-NT-2} and~\ref{fig:233-generic2-T}--\ref{fig:233-generic2-NT-2}, with all the necessary accompanying text contained in the descriptions of Algorithms~\ref{algo:main} and~\ref{algo:main2}. Behold!

\subsection{Bh\=askara: Non-Figures}

In another treatise, L\=il\=avat\=i, Bh\=askara introduces the term ``non-figure'' to describe a nonexistent polygon whose sides fail to satisfy the (generalized) triangle inequality~\cite{Plo-2009}, p.~188:

\begin{webquote}
A straight-sided figure described in [over]-confidence
where the sum of [all] the sides except [one] is smaller
than or equal to [that] one side is to be known as a
``non-figure.'' (verse 163)
\end{webquote}

This resonates remarkably with our interest in the Hurwitz realizability problem. Perhaps we should adopt the term ``non-cover'' for non-realizable branch data?

Paying homage to Bh\=askara, in this paper we use the term ``non-triangle'' for a triple $(u,v,w)$ that fails to satisfy the triangle inequality.

\section{Reduction to Three Critical Values}
\label{section:reduction}

\begin{proposition}
If the realizability of branch data of the form~$(\ref{eq:passport-wide-233-a})$ or $(\ref{eq:passport-wide-233-b})$ holds for $q=3$, then it holds for all $q$.
\end{proposition}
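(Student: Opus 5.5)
Given a genus-$0$ branch datum $\pi$ of the form~(\ref{eq:passport-wide-233-a}) or~(\ref{eq:passport-wide-233-b}) with $q\ge 4$, we reduce it to a three-point datum of the same shape by composition. Realize $\lambda_4,\dots,\lambda_q$ together with one extra partition by a covering of smaller degree, realize the first three partitions by a Belyi function of smaller degree, and glue. Concretely, the plan is an induction on $q$ that merges the last two components by a standard \emph{degree-reduction}, or ``cut and paste'', argument. The argument should only touch $\lambda_3,\dots,\lambda_q$, so that the $(2^r,a)$ and $(3^s,\dots)$ shapes of the first two components are preserved.

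First we record how the Riemann--Hurwitz formula~(\ref{eq:RH}) constrains lengths. For genus $0$ we have $\sum_{i=1}^q (n-\ell(\lambda_i)) = 2n-2$. The first three components already contribute nearly $2n-2$: for type~(\ref{eq:passport-wide-233-a}),
$$
\sum_{i=1}^3 (n-\ell(\lambda_i)) = r+s+t+(a-1)+\tfrac{2}{3}\cdot\ldots,
$$
which is roughly $n/2+2n/3+2n/3>2n-2$ for large $n$. So the genus-$0$ condition forces the defect $d_i=n-\ell(\lambda_i)$ of each of $\lambda_4,\dots,\lambda_q$ to be compensated by smaller $a,b,c$-type exceptional parts. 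The key step is to replace $\lambda_3$ (say) together with $\lambda_4,\dots,\lambda_q$ by a single partition $\mu$ of the form $(3^{t'},c')$ or $(3^{t'},b',c')$. This partition must have total defect $d(\mu)=d(\lambda_3)+\sum_{i\ge4} d(\lambda_i)$, so the new three-point datum $(\lambda_1\sep\lambda_2\sep\mu)$ is again of genus $0$ and of one of the two types.

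Next, we would realize $\pi$ as a composition-free degeneration. Rather than composing, we would use the classical fact (e.g.\ \cite{EKS-1984}) that coalescing branch points can be reversed: if a genus-$0$ covering with passport $(\lambda_1\sep\lambda_2\sep\mu)$ exists, we deform it locally near the preimages of the third critical value. Near each point of ramification $e$ over $x_3$, the map is locally $z\mapsto z^e$. In a small disc this local polynomial can be perturbed to a polynomial of degree $e$ with prescribed branch data over several nearby points. By Thom's theorem~\cite{Thom-1965}, any collection of partitions of $e$ satisfying the polynomial Riemann--Hurwitz count is realizable by a polynomial. Gluing these local polynomials into the covering (a standard surgery preserving genus) splits $x_3$ into $x_3,x_4,\dots,x_q$ with monodromy $\lambda_3,\dots,\lambda_q$. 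Thus we must choose $\mu$ and, for each part $e$ of $\mu$, a decomposition of the restriction of $\lambda_3,\dots,\lambda_q$ to a block of total size $e$. Each block must be a polynomial branch datum, i.e.\ have total defect exactly $e-1$.

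The main obstacle is this combinatorial partition-splitting step. The parts of $\lambda_3,\dots,\lambda_q$ must be grouped into blocks over a partition $\mu$ of the required $(3^{t'},c')$ or $(3^{t'},b',c')$ form with $3\nmid b',c'$, and each block must satisfy the tree count. I would try first the simplest choice: all blocks except one or two are trivial (size $1$, consisting of fixed points in every $\lambda_i$, $i\ge3$), or blocks of size $3$ carrying only $\lambda_3$'s 3-cycles. A single large block of size $c'$ (or two blocks $b',c'$) would then absorb all nontrivial parts of $\lambda_4,\dots,\lambda_q$ and the exceptional parts of $\lambda_3$. The defect count forces this block to satisfy the polynomial condition, and one must then check arithmetically that $c'$ (respectively $b',c'$) can be arranged not to be divisible by $3$. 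If $3$ divides the natural choice, adjust by moving one $3$-cycle of $\lambda_3$ into or out of the block, or switch between types~(\ref{eq:passport-wide-233-a}) and~(\ref{eq:passport-wide-233-b}). Verifying that such an adjustment is always possible, including small degenerate cases like $t=0$, is where I expect the real work to lie.
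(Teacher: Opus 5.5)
Your mechanism is sound. You merge $\lambda_3,\dots,\lambda_q$ into one partition $\mu$ of the same shape, realize $(\lambda_1\sep\lambda_2\sep\mu)$ by the $q=3$ hypothesis, and then factor $g_3$ cycle by cycle, using Thom's theorem on one long cycle and trivial factorizations elsewhere. But the proposal stops exactly at the step that carries the content: you never show that a suitable $\mu$ and block decomposition exist. Your one quantitative estimate is also wrong. For large $n$ the first three components do not exceed $2n-2$; they fall short of it. In both forms~(\ref{eq:passport-wide-233-a}) and~(\ref{eq:passport-wide-233-b}) their total defect is $\frac{11n+3a+2b+2c}{6}-3$, so Riemann--Hurwitz gives
\[
D:=\sum_{i\ge 4}\bigl(n-\ell(\lambda_i)\bigr)=\frac{n-3a-2b-2c}{6}+1 .
\]
Since $D\ge 1$ for $q\ge 4$, this forces $n\ge 3a+2b+2c$. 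That inequality is the missing idea. Without it, the adjustments you contemplate (moving $3$-cycles in or out of the block, switching forms, treating $t=0$ separately) remain unverified.

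With $D$ in hand the gap closes directly, and no adjustments are needed. Take $\mu=(3^{t-D},c+3D)$ in the first form and $\mu=(3^{t-D},b,c+3D)$ in the second. Then $t-D=\frac{n+3a+2b-6}{6}\ge 0$, respectively $\frac{n+3a-6}{6}\ge 0$, so $t=0$ never occurs. Also $3\nmid c+3D$ and $n-\ell(\mu)=n-\ell(\lambda_3)+D$. Hence $(\lambda_1\sep\lambda_2\sep\mu)$ is a genus-$0$ passport of the required form.

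On every cycle of $g_3$ other than the $(c+3D)$-cycle, use the trivial factorization: the cycle itself for $\lambda_3$, and the identity for each $\lambda_i$, $i\ge 4$. On the $(c+3D)$-cycle, prescribe $(3^D,c)$ for $\lambda_3$. For each $i\ge4$, prescribe the nontrivial parts of $\lambda_i$ padded with $1$'s. This fits because those parts move at most $2(n-\ell(\lambda_i))\le 2D<c+3D$ points. The block's total defect is $(2D+c-1)+D=c+3D-1$, so Thom's theorem applies.

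Completed this way, your route differs from the paper's. The paper merges the first component $(2^r,a)$ with $\lambda_q$ into $(2^m,n-2m)$, where $m=r+\ell(\lambda_q)-n$, using that $\lambda_q$ has more than $2m$ fixed points. It splits the long cycle via the plane-tree passport $(\lambda'\sep 2^{r-m},a\sep n-2m)$ and inducts on $q$. Your version absorbs all of $\lambda_4,\dots,\lambda_q$ into the third component at once and needs no induction. But it rests entirely on the defect computation above, which the proposal omits.
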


\proof We argue by induction, with the case $q=3$ as the base.
Let $q\ge 4$ and assume every such passport of width $q-1$ is realizable. Take an arbitrary genus-$0$ passport $\pi$ of the form~(\ref{eq:passport-wide-233-a}) or~(\ref{eq:passport-wide-233-b}).

Write $\lambda=\lambda_q = 1^{m_1}\, 2^{m_2}\, 3^{m_3} \ldots$; then
$$
n = m_1 + 2m_2 + 3m_3 + \ldots
\ge 2 (m_1 + m_2 + m_3 + \ldots) - m_1
= 2 \ell(\lambda) - m_1,
$$
$$
\ell(\lambda) \le \frac{n+m_1}{2}.
$$
Set $m := r + \ell(\lambda) - n$. From the Riemann--Hurwitz formula,
\begin{align*}
2 &= r + 1 + s + 1 + t + 1 - n + \sum_{i=4}^{q}(\ell(\lambda_i) - n) \le {} \\
&\le r +  s + t + 3 - n + \ell(\lambda) - n = {} \\
&= m + s + t + 3 - n \le m + \frac{2n-2}{3} + 3 - n,
\end{align*}
so $m \ge (n-1)/3 > 0$. Moreover, $r = (n-a)/2 < n/2$, and therefore
$$
m < \frac{n}{2} + \frac{n+m_1}{2} - n = \frac{m_1}{2},
$$
so the number of $1$'s in $\lambda$ exceeds $2m$. Consequently, we can split $\lambda$ into two parts:
$$
\lambda = \lambda' \cup (1^{2m}), \qquad \lambda' \vdash (n-2m), \qquad \ell(\lambda') = \ell(\lambda) - 2m.
$$

Consider the passport
$$
\pi' = (2^m, (n-2m) \sep 3^s, b \sep 3^t, c \sep \lambda_4 \sep \ldots \sep \lambda_{q-1} ).
$$
By the Riemann--Hurwitz formula,
\begin{align*}
2 - 2g(\pi') &= m + 1 + s + 1 + t + 1 + \sum_{i=4}^{q-1} \ell(\lambda_i) -(q-3) n = {}\\
&= r +\ell(\lambda) - n + 1  + s + 1 + t + 1 + \sum_{i=4}^{q-1} \ell(\lambda_i) - (q-3) n = {}\\
&= r + 1 + s + 1 + t + 1 + \sum_{i=4}^{q} \ell(\lambda_i) - (q-2) n = 2 - 2g(\pi).
\end{align*}
Hence $g(\pi') = g(\pi) = 0$ and, by the induction hypothesis, $\pi'$ is realizable. By the monodromy correspondence going back to Hurwitz~\cite{Hur-1891}, there exist permutations $g_i\in\Sym_n$, $i=1,\ldots,q-1$, such that
\begin{itemize}
\item[(1)] the cycle types match the passport $\pi'$:
$$
\bigl(c(g_1),\ldots,c(g_{q-1})\bigr) = \pi';
$$
\item[(2)] their product is the identity: $g_1\ldots g_{q-1}=1$;
\item[(3)] they generate a transitive subgroup of $\Sym_n$.
\end{itemize}
Conjugating all $g_i$ by a suitable element of $\Sym_n$, we may assume that
$$
g_1 = (1,2,\ldots, 2n-m) \cdot \omega_1 \ldots \omega_m,
$$
where $\omega_i$ are disjoint transpositions not involving $1,2,\ldots,2n-m$.

From the definition $m=r+\ell(\lambda)-n$ it is clear that $m\le r$. Consider the passport $\pi'' = (\lambda' \sep 2^{r-m}, a \sep (n-2m))$. By the Riemann--Hurwitz formula its genus is zero:
\begin{align*}
2 - 2g(\pi'') &= \ell(\lambda') + r - m + 1 + 1 - (n-2m) = {}\\
&= \ell(\lambda) - 2m + r - m + 2 - (n - 2m) = {}\\
&= \ell(\lambda) + r - m + 2 - n = 2.
\end{align*}

Thus $\pi''$ is a plane tree passport and hence realizable: there exist permutations $h_1, h_2\in \Sym_{n-2m}$ with
$$
c(h_1) = \lambda', \qquad
c(h_2) = (2^{r-m},a), \qquad
h_1\cdot h_2 = (1,2,\ldots,n-2m).
$$
Set $h=h_2\cdot\omega_1\ldots \omega_m$; then
$$
c(h)=(2^m) \cup (2^{r-m},a) = (2^r,a),
$$
$$
h_1 \cdot h = h_1 \cdot h_2\cdot\omega_1\ldots \omega_m = g_1.
$$
The tuple $(h,g_2,\ldots,g_{q-1},h_1)$ defines a covering with passport $\pi$:
\begin{itemize}
\item[(1)] the cycle types match $\pi$;
\item[(2)] the product is the identity:
$$
h\cdot g_2\ldots g_{q-1}\cdot h_1 \sim h_1\cdot h\cdot g_2\ldots g_{q-1} = g_1g_2\ldots g_{q-1} = 1;
$$
\item[(3)] the permutations generate a group containing $g_1,\ldots,g_{q-1}$, hence a transitive group.
\end{itemize}
This completes the proof of the proposition.
\hfill $\square$

\section{Realizability of $(2^r,a \sep 3^s,b \sep 3^t,c)$}
\label{section:realize-a}

\subsection{Dessins for $a=1$, $b=1,2$.}

Figure~\ref{fig:233-dessins} shows all genus-$0$ dessins with $a=1$, $b=1,2$ up to $n\le 25$.

\begin{figure}[htp]
\centering
\begin{subfigure}[b]{0.15\textwidth}
\centering
\includegraphics[angle=180]{images/233_dessins-5.mps}
\vspace{-0.05cm}
\caption*{$(a)$ $n=5$}
\end{subfigure}
\begin{subfigure}[b]{0.18\textwidth}
\centering
\includegraphics[angle=180]{images/233_dessins-11.mps}
\caption*{$(c)$ $n=11$}
\end{subfigure}
\begin{subfigure}[b]{0.22\textwidth}
\centering
\includegraphics[angle=180]{images/233_dessins-17.mps}
\vspace{-0.05cm}
\caption*{$(e)$ $n=17$}
\end{subfigure}
\begin{subfigure}[b]{0.26\textwidth}
\centering
\includegraphics[angle=180]{images/233_dessins-23.mps}
\caption*{$(g)$ $n=23$}
\end{subfigure}
\vskip 0.5cm
\begin{subfigure}[b]{0.15\textwidth}
\centering
\includegraphics[angle=180]{images/233_dessins-7.mps}
\vspace{-0.05cm}
\caption*{$(b)$ $n=7$}
\end{subfigure}
\begin{subfigure}[b]{0.18\textwidth}
\centering
\includegraphics[angle=180]{images/233_dessins-13.mps}
\caption*{$(d)$ $n=13$}
\end{subfigure}
\begin{subfigure}[b]{0.22\textwidth}
\centering
\includegraphics[angle=180]{images/233_dessins-19.mps}
\vspace{-0.05cm}
\caption*{$(f)$ $n=19$}
\end{subfigure}
\begin{subfigure}[b]{0.26\textwidth}
\centering
\includegraphics[angle=180]{images/233_dessins-25.mps}
\caption*{$(h)$ $n=25$}
\end{subfigure}
\caption{Dessins with passport $(2^r,a \mid 3^s,b \mid 3^t,c)$ for $a=1$ and $b=1,2$.}
\label{fig:233-dessins}
\end{figure}

This picture suggests the construction for arbitrary $n$.


\subsection{Assembling a Dessin: Terminals and Connectors}

A fairly broad class of genus-$0$ dessins with passports of the form $(2^r,a \mid 3^s,b \mid 3^t,c)$ can be described as follows. Take two partial graphs, which we call {\it terminals}, and connect them by a chain of other partial graphs, which we call {\it connectors}.

The terminals $A^k_1$, $A^k_2$, $B^k_1$, $B^k_2$ ($k\ge 0$) and the connector $C$ are shown in Fig.~\ref{fig:233-terminals}. Terminals $A^k_1$, $A^k_2$ contain one white vertex whose degree differs from $2$: that degree equals $2k+1$. Terminals $B^k_1$, $B^k_2$ contain one black vertex whose degree differs from $3$: that degree equals $3k+1$ and $3k+2$ respectively.

\begin{figure}[!ht]
\centering
\begin{subfigure}[b]{0.4\textwidth}
\begin{center}
\begin{tabular}{ccc}
\vspace{-0.97cm}
\begin{subfigure}[b]{0.2\textwidth}
\hspace{0.5cm}
\includegraphics[angle=180]{images/233_terminals-11.mps}
\vspace{0.44cm}
\caption*{$A_1^0$}
\end{subfigure}
&
\hspace{0.6cm}
&
\begin{subfigure}[b]{0.2\textwidth}
\includegraphics[angle=180]{images/233_terminals-21.mps}
\caption*{$A_2^0$}
\end{subfigure}
\\[1cm]
\begin{subfigure}[b]{0.2\textwidth}
\includegraphics[angle=180]{images/233_terminals-13.mps}
\vspace{0.55cm}
\caption*{$A_1^1$}
\end{subfigure}
&
&
\begin{subfigure}[b]{0.2\textwidth}
\includegraphics[angle=180]{images/233_terminals-23.mps}
\vspace{0.09cm}
\caption*{$A_2^1$}
\end{subfigure}
\\[0.3cm]
\begin{subfigure}[b]{0.2\textwidth}
\includegraphics[angle=180]{images/233_terminals-15.mps}
\vspace{0.2cm}
\caption*{$A_2^2$}
\end{subfigure}
&
&
\begin{subfigure}[b]{0.2\textwidth}
\includegraphics[angle=180]{images/233_terminals-25.mps}
\vspace{0.3cm}
\caption*{$A_2^2$}
\end{subfigure}
\end{tabular}
\end{center}
\vspace{-0.2cm}
\caption*{$(a)$}
\end{subfigure}
\begin{subfigure}[b]{0.1\textwidth}
\centering
\begin{subfigure}[b]{\textwidth}
\centering
\includegraphics[angle=180]{images/233_terminals-99.mps}
\vspace{0.1cm}
\caption*{$C$}
\end{subfigure}
\vspace{6.63cm}
\end{subfigure}
\begin{subfigure}[b]{0.4\textwidth}
\begin{center}
\begin{tabular}{ccc}
\begin{subfigure}[b]{0.2\textwidth}
\includegraphics[angle=180]{images/233_terminals-31.mps}
\vspace{0.4cm}
\caption*{$B_1^0$}
\end{subfigure}
&
\hspace{0.4cm}
&
\begin{subfigure}[b]{0.2\textwidth}
\includegraphics[angle=180]{images/233_terminals-32.mps}
\vspace{0.1cm}
\caption*{$B_2^0$}
\end{subfigure}
\\[0.4cm]
\begin{subfigure}[b]{0.2\textwidth}
\includegraphics[angle=180]{images/233_terminals-34.mps}
\vspace{0.45cm}
\caption*{$B_1^1$}
\end{subfigure}
&
&
\begin{subfigure}[b]{0.2\textwidth}
\includegraphics[angle=180]{images/233_terminals-35.mps}
\vspace{0.45cm}
\caption*{$B_2^1$}
\end{subfigure}
\\[0.23cm]
\begin{subfigure}[b]{0.2\textwidth}
\includegraphics[angle=180]{images/233_terminals-37.mps}
\vspace{0.14cm}
\caption*{$B_1^2$}
\end{subfigure}
&
&
\begin{subfigure}[b]{0.2\textwidth}
\includegraphics[angle=180]{images/233_terminals-38.mps}
\vspace{0.14cm}
\caption*{$B_2^2$}
\end{subfigure}
\end{tabular}
\end{center}
\vspace{-0.2cm}
\caption*{$(b)$}
\end{subfigure}
%
\caption{%
        \begin{tabular}[t]{@{} l @{\ } >{\RaggedRight}p{0.6\textwidth} @{}}
            $(a)$ & $(2,3,3)$-terminals for $a\equiv 1\pmod{2}$ (white vertex); \\
            $(b)$ & $(2,3,3)$-terminals for $b\equiv 1,2\pmod{3}$ (black vertex).
        \end{tabular}%
    }
\label{fig:233-terminals}
\end{figure}

Note that both the connector $C$ and the repeating blocks in terminals $A_1^i$, $B_1^j$, $B_2^j$ are essentially tetrahedral graphs. For the terminal $A_2^i$ the addition of a tetrahedron is slightly less obvious: cut along a radial edge in $A_2^0$, cut along an edge of the tetrahedron, and glue.

Consequently, varying the number of connectors or the number of repeating blocks in the terminals changes the edge count of the graph by a multiple of $12$.

Using the connectors and terminals described above, we can construct a dessin with any prescribed $a$, $b$, and sufficiently large $c$; see the general picture in Fig.~\ref{fig:233-generic-NT-3}. It remains to handle the cases not covered by this construction.

\subsection{Constructing the Dual Dessin}

In this section we describe the realization of a dessin for sufficiently large values of $a$.

\begin{figure}[ht]
\centering
\centering
\includegraphics[align=c]{images/332_generic-0.mps}
\caption{A dessin with passport $(2^{15}, 13 \sep 3^{14}, 1 \sep 3^{13}, 4)$.}
\label{fig:332-dessin-ex0}
\end{figure}

Figure~\ref{fig:332-dessin-ex0} shows a $43$-edge dessin realizing the passport $(2^{15}, 13\sep 3^{14}, 1 \sep 3^{13}, 4)$. A certain structure is visible in this dessin from which further similar dessins can be derived, but describing it directly is awkward.

Instead, we describe the construction of a dessin with passport $(3^s,b\sep 3^t,c\sep 2^r,a)$; the dessin we need is then obtained by taking the dual.

As in the $(2,3,3)$ case, some dessins with passports $(3^s,b\sep 3^t,c\sep 2^r,a)$ can be realized by connecting two terminals (one with a white vertex of degree $b$, one with a black vertex of degree $c$) either by a single edge or by a chain of connectors. The only possible connector is shown in Fig.~\ref{fig:332-connector}, and the terminals form four series $D^k_q$, $q=0,3,8,11$, depicted in Figs.~\ref{fig:332-terminals-0-11} and~\ref{fig:332-terminals-3-8}. In these figures, every vertex of the terminal $D^k_q$ has degree $3$ except for one black vertex of degree $3k+1$ or $3k+2$. Swapping the colors of all vertices of $D^k_q$ gives the terminal $\bar D^k_q$, which has one white vertex of degree $3k+1$ or $3k+2$.

\begin{figure}[p]
\centering
\begin{tabular}{ccccc}
\rlap{\raisebox{2\totalheight}{$E^0$}}
\includegraphics[align=c]{images/332_terminals-0.mps}
&
\hspace{1cm}
&
\rlap{\raisebox{2\totalheight}{$E^1$}}
\includegraphics[align=c]{images/332_terminals-1.mps}
&
\hspace{0.5cm}
&
\rlap{\raisebox{2\totalheight}{$E^2$}}
\includegraphics[align=c]{images/332_terminals-2.mps}
\end{tabular}
\caption{The $(3,3,2)$-connector and its repetitions.}
\label{fig:332-connector}
\end{figure}

\begin{figure}[p]
\centering
\begin{tabular}{ccccc}
\rlap{\raisebox{2.6\totalheight}{$D^0_0$}}
\hspace{0.8cm}
\includegraphics[align=c]{images/332_terminals-10.mps}
&
\hspace{0.5cm}
&
\rlap{\raisebox{2.6\totalheight}{$D^1_0$}}
\hspace{0.4cm}
\includegraphics[align=c]{images/332_terminals-11.mps}
&
\hspace{0.5cm}
&
\rlap{\raisebox{2.6\totalheight}{$D^2_0$}}
\hspace{0.4cm}
\includegraphics[align=c]{images/332_terminals-12.mps}
\\
\\
\rlap{\raisebox{2.6\totalheight}{$D^0_{11}$}}
\hspace{0.4cm}
\includegraphics[align=c]{images/332_terminals-110.mps}
&
&
\rlap{\raisebox{2.6\totalheight}{$D^1_{11}$}}
\hspace{0.2cm}
\includegraphics[align=c]{images/332_terminals-111.mps}
&
&
\rlap{\raisebox{2.6\totalheight}{$D^2_{11}$}}
\hspace{0.2cm}
\includegraphics[align=c]{images/332_terminals-112.mps}
\\
\end{tabular}
\caption{$(3,3,2)$-terminals $D^k_0$, $D^k_{11}$.}
\label{fig:332-terminals-0-11}
\end{figure}

\begin{figure}[p]
\centering
\begin{tabular}{ccccccc}
\rlap{\raisebox{3.2\totalheight}{$D^0_3$}}
\hspace{0.2cm}
\includegraphics[align=c]{images/332_terminals-30.mps}
&
\hspace{0.15cm}
&
\rlap{\raisebox{3.2\totalheight}{$D^1_3$}}
\includegraphics[align=c]{images/332_terminals-31.mps}
&
\hspace{0.15cm}
&
\hspace{0.2cm}
\rlap{\raisebox{3.2\totalheight}{$D^2_3$}}
\hspace{0.2cm}
\includegraphics[align=c]{images/332_terminals-32.mps}
&
\hspace{0.25cm}
&
\hspace{0.2cm}
\rlap{\raisebox{3.2\totalheight}{$D^3_3$}}
\hspace{0.2cm}
\includegraphics[align=c]{images/332_terminals-33.mps}
\\
\\
\rlap{\raisebox{3.2\totalheight}{$D^0_8$}}
\includegraphics[align=c]{images/332_terminals-80.mps}
&
&
\rlap{\raisebox{3.2\totalheight}{$D^1_8$}}
\includegraphics[align=c]{images/332_terminals-81.mps}
&
&
\rlap{\raisebox{3.2\totalheight}{$D^2_8$}}
\hspace{0.2cm}
\includegraphics[align=c]{images/332_terminals-82.mps}
&
&
\rlap{\raisebox{3.2\totalheight}{$D^3_8$}}
\hspace{0.2cm}
\includegraphics[align=c]{images/332_terminals-83.mps}
\\
\end{tabular}
\caption{$(3,3,2)$-terminals $D^k_3$, $D^k_8$.}
\label{fig:332-terminals-3-8}
\end{figure}

\begin{figure}[p]
\centering
\includegraphics[align=c]{images/332_generic-1.mps}
\caption{A dessin with passport $(3^{14}, 1 \sep 3^{13}, 4 \sep 2^{15}, 13)$: $\bar D^0_3 + E^2 + D^1_3$.}
\label{fig:332-dessin-ex1}
\end{figure}

\begin{figure}[p]
\centering
\includegraphics[align=c]{images/332_generic-2.mps}
\caption{A dessin with passport $(3^{5}, 1 \sep 3^{5}, 1 \sep 2^{5}, 6)$: $\bar D^0_0 + E^1 + D^0_3$.}
\label{fig:332-dessin-ex2}
\end{figure}

The number of edges in the terminal $D^k_q$ equals $12k+q$. Connecting terminals $D^{\lfloor b/3\rfloor}_q$ and $D^{\lfloor c/3\rfloor}_q$, we obtain a dessin with a white vertex of degree $b$, a black vertex of degree $c$, and total edge count $n\equiv 2q+1\pmod{12}$. Therefore, for each possible residue $n\equiv 1,5,7,11\pmod{12}$ we may take $q=0,8,3,11$ respectively.

An example of a dessin obtained by this construction is shown in Fig.~\ref{fig:332-dessin-ex1}; compare it with Fig.~\ref{fig:332-dessin-ex0}. All four series are illustrated in more detail in Fig.~\ref{fig:233-generic-NT-2} below (note, however, that $D^0_q$ looks quite different from $D^k_q$ for $k>0$).

\begin{remark}
\label{rem:negative-connector}
In the case $n\equiv 11 \pmod{12}$, the two terminals can not only be joined directly without any connector, but can also ``overlap'' each other; see the example in Fig.~\ref{fig:332-dessin-ex3}.
\end{remark}

\begin{figure}
\centering
\includegraphics[align=c]{images/332_generic-3.mps}
\caption{Edge case: overlapping terminals $D^k_{11}$ (connector $E^{-1}$) for $n\equiv 11\pmod{12}$.}
\label{fig:332-dessin-ex3}
\end{figure}

\begin{remark}
In the construction above we connect two terminals with the same value of $q$. Can one use terminals with different values of $q$? For example, Fig.~\ref{fig:332-dessin-ex2} shows a dessin assembled from $\bar D^0_0$, one connector, and $D^0_3$. However, this dessin has $16$ edges and its outer face has degree $a=6$, so the condition $2\nmid a$ fails. The following table of values of $q_1+q_2+1\pmod{12}$ shows that mixing terminals with different parameters $q$ produces a dessin whose edge count is divisible by $2$ or $3$:
\begin{center}
\begin{tabular}{c|cccc}
& 0 & 3 & 8 & 11 \\
\hline
0 & 1 & 4 & 9 & 0 \\
3 & 4 & 7 & 0 & 3 \\
8 & 9 & 0 & 5 & 8 \\
11 & 0 & 3 & 8 & 11 \\
\end{tabular}
\end{center}
Nevertheless, we will use the combinations $D^i_0 + E^k + D^j_8$ and $D^i_3 + E^k + D^j_{11}$ to construct the dual dessin in the proof of part~$(b)$ of Theorem~\ref{thm:main-dessin}.
\end{remark}


\subsection{The Construction Algorithm}

We are now ready to describe the structure of the dessin in the general case. It turns out that the structure of a dessin with a passport of the form~(\ref{eq:passport-233-a}) depends on the residue $n \pmod{12}$ and on certain relations among $a$, $b$, and $c$.

Write $a = 2u+1$, $b = 3v + 1,2$, $c = 3w + 1,2$ with $u,v,w\ge 0$.
By the symmetry of the tetrahedral case, we may assume $v\le w$.

Depending on the residue of $n$ modulo $12$, the Riemann--Hurwitz formula gives:
\begin{itemize}
\item $n\equiv 1,7\pmod{12}$ $\Rightarrow$
$$
2 = (r+1) + (s+1) + (t+1) - n = \frac{n-a}{2} + \frac{n-b}{3} + \frac{n-c}{3} - n + 3 =
\frac{n-1}{6} - u - v - w + 2,
$$
$$
u + v + w = \frac{n-1}{6}.
$$
\item $n\equiv 5,11\pmod{12}$ $\Rightarrow$
$$
2 = (r+1) + (s+1) + (t+1) - n = \frac{n-a}{2} + \frac{n-b}{3} + \frac{n-c}{3} - n + 2 =
\frac{n-5}{6} - u - v - w + 2,
$$
$$
u + v + w = \frac{n-5}{6}.
$$
\end{itemize}
Consequently, $u+v+w$ is even when $n\equiv 1,5\pmod{12}$ and odd when $n\equiv 7,11\pmod{12}$. Define the ``integer semiperimeter''
$$
p =
\begin{cases}
\displaystyle
\frac{u+v+w}{2}, & \text{for}\ n\equiv 1,5\pmod{12};\\[4pt]
\displaystyle
\frac{u+v+w-1}{2}, & \text{for}\ n\equiv 7,11\pmod{12}.
\end{cases}
$$

The construction splits into three cases according to whether the triangle inequalities hold:
{\setlength{\leftmargini}{2.5cm}
\begin{itemize}
\item[(T)] Triangle case (triangle inequalities hold): $u\le p$, $v\le p$, $w\le p$.
\item[(NT-3)] Non-Triangle-3 case (third inequality fails): $w > p$.
\item[(NT-2)] Non-Triangle-2 case (first inequality fails): $u > p$.
\end{itemize}
}

\begin{remark}
When $n\equiv 1,5\pmod{12}$, the inequality $u\le p$ is trivially equivalent to $u\le v+w$. When $n\equiv 7,11\pmod{12}$, the inequality $u\le p$ is equivalent to $u+1\le v+w$, but since $u+v+w$ is odd, this is in turn equivalent to $u\le v+w$. Thus, despite the non-standard definition of the ``semiperimeter'' $p$, the stated inequalities are equivalent to the usual triangle inequalities.
\end{remark}

\begin{remark}
Our interpretation of $u$, $v$, $w$ as side lengths of a triangle (or of a non-triangle) is formal. We use it to unify the construction of the dessin realizing a given passport. It would be interesting to find a deeper connection between the geometry of this triangle and the analytic properties of the corresponding Belyi functions.
\end{remark}


\begin{figure}[ht]
\centering
\begin{subfigure}[b]{0.45\textwidth}
\centering
\includegraphics{images/233_generic-101.mps}
\caption*{$(a)\ n\equiv 1\pmod{12}$}
\end{subfigure}
\begin{subfigure}[b]{0.45\textwidth}
\centering
\includegraphics{images/233_generic-105.mps}
\caption*{$(b)\ n\equiv 5\pmod{12}$}
\end{subfigure}
\vskip 0.3cm
\begin{subfigure}[b]{0.45\textwidth}
\centering
\includegraphics{images/233_generic-107.mps}
\caption*{$(c)\ n\equiv 7\pmod{12}$}
\end{subfigure}
\begin{subfigure}[b]{0.45\textwidth}
\centering
\includegraphics{images/233_generic-111.mps}
\vspace{0.27cm}
\caption*{$(d)\ n\equiv 11\pmod{12}$}
\end{subfigure}
\caption{Dessins $(2^r,a \sep 3^s,b \sep 3^t,c)$: triangle case.}
\label{fig:233-generic-T}
\end{figure}

\begin{figure}[ht]
\centering
\begin{subfigure}[b]{0.45\textwidth}
\centering
\includegraphics{images/233_generic-201.mps}
\caption*{$(a)\ n\equiv 1\pmod{12}$}
\end{subfigure}
\begin{subfigure}[b]{0.45\textwidth}
\centering
\includegraphics{images/233_generic-205.mps}
\caption*{$(b)\ n\equiv 5\pmod{12}$}
\end{subfigure}
\vskip 0.3cm
\begin{subfigure}[b]{0.45\textwidth}
\centering
\includegraphics{images/233_generic-207.mps}
\caption*{$(c)\ n\equiv 7\pmod{12}$}
\end{subfigure}
\begin{subfigure}[b]{0.45\textwidth}
\centering
\includegraphics{images/233_generic-211.mps}
\caption*{$(d)\ n\equiv 11\pmod{12}$}
\end{subfigure}
\caption{Dessins $(2^r,a \sep 3^s,b \sep 3^t,c)$: non-triangle case-3.}
\label{fig:233-generic-NT-3}
\end{figure}

\begin{figure}[ht]
\centering
\begin{subfigure}[b]{0.9\textwidth}
\centering
\includegraphics[align=c]{images/332_generic-301.mps}
\caption*{$(a)\ n\equiv 1\pmod{12}$}
\end{subfigure}
\begin{subfigure}[b]{0.9\textwidth}
\centering
\includegraphics[align=c]{images/332_generic-305.mps}
\caption*{$(b)\ n\equiv 5\pmod{12}$}
\end{subfigure}
\begin{subfigure}[b]{0.9\textwidth}
\centering
\includegraphics[align=c]{images/332_generic-307.mps}
\caption*{$(c)\ n\equiv 7\pmod{12}$}
\end{subfigure}
\begin{subfigure}[b]{0.9\textwidth}
\centering
\includegraphics[align=c]{images/332_generic-311.mps}
\caption*{$(d)\ n\equiv 11\pmod{12}$}
\end{subfigure}
\caption{Dessins $(2^r,a \sep 3^s,b \sep 3^t,c)$: non-triangle case-2 (dual dessins).}
\label{fig:233-generic-NT-2}
\end{figure}

\clearpage

\begin{algorithm}
\label{algo:main}
Input: a genus-$0$ passport of the form $(2^r,a \sep 3^s,b \sep 3^t,c)$ with $b\le c$.
\begin{itemize}
\item[Step 1.] Compute $u=\lfloor a/2\rfloor$, $v=\lfloor b/3\rfloor$, $w=\lfloor c/3\rfloor$, and $p=\lfloor(u+v+w)/2\rfloor$.
\item[Step 2.] Construct the dessin using the appropriate figure:
{\setlength{\leftmargini}{2.5cm}
\begin{itemize}
\item[(T)] $u\le p$, $v\le p$, $w\le p$: Fig.~\ref{fig:233-generic-T}, parameters $p-u$, $p-v$, $p-w$, $p-w+1$;
\item[(NT-3)] $w>p$: Fig.~\ref{fig:233-generic-NT-3}, parameters $u$, $v$, $w-p$, $w-p-1$;
\item[(NT-2)] $u>p$: Fig.~\ref{fig:233-generic-NT-2}, parameters $v$, $w$, $u-p$, $u-p-1$, $u-p-2$.
\end{itemize}
}
\item[Step 3.] In case NT-2, take the dual dessin.
\end{itemize}
\end{algorithm}

\noindent
{\it Proof of correctness of Algorithm~\ref{algo:main}.}
All parameters used by the algorithm in the respective constructions are non-negative, with the sole exception of the parameter $u-p-2$ in case (NT-2) with $n\equiv 11\pmod{12}$, where it equals $-1$. This is nevertheless a valid construction, as noted in Remark~\ref{rem:negative-connector}.

Thus the algorithm correctly produces a dessin.
It remains to verify that the resulting dessin has the required passport.
Behold! \hfill $\square$


\section{Realizability of $(2^r,a \sep 3^s \sep 3^t,b,c)$}
\label{section:realize-b}

\subsection{Additional Terminals}

We now have essentially everything needed to prove the realizability of genus-$0$ passports of the form
\begin{equation}
\pi = (2^r,a \sep 3^s \sep 3^t,b,c), \quad \text{where $2\nmid a$, $3\nmid b$, $3\nmid c$.}
\label{eq:passport-b}
\end{equation}

We introduce additional terminals containing one face of degree not divisible by $3$; see Fig.~\ref{fig:233-terminals-F}. Connecting terminals $A^i_1$, $A^i_2$ and $F^j_1$, $F^j_2$ via the standard $(2,3,3)$-connectors $C$, we obtain certain dessins with passports of the form~(\ref{eq:passport-b}); see Fig.~\ref{fig:233-generic2-NT-3}.

\begin{figure}[ht]
\centering
\begin{tabular}{ccccccc}
\rlap{\raisebox{2.6\totalheight}{$F^0_1$}}
\hspace{0.8cm}
\includegraphics[align=c]{images/233_terminals-40.mps}
&
\hspace{0.5cm}
&
\rlap{\raisebox{2.6\totalheight}{$F^1_1$}}
\hspace{0.4cm}
\includegraphics[align=c]{images/233_terminals-41.mps}
&
\hspace{0.5cm}
&
\rlap{\raisebox{2.6\totalheight}{$F^2_1$}}
\hspace{0.4cm}
\includegraphics[align=c]{images/233_terminals-42.mps}
&
\hspace{0.8cm}
&
\rlap{\raisebox{2.6\totalheight}{$F^3_1$}}
\hspace{0.4cm}
\includegraphics[align=c]{images/233_terminals-43.mps}
\\
\\
\rlap{\raisebox{2.6\totalheight}{$F^0_2$}}
\hspace{0.8cm}
\includegraphics[align=c]{images/233_terminals-50.mps}
&
\hspace{0.5cm}
&
\rlap{\raisebox{2.6\totalheight}{$F^1_2$}}
\hspace{0.4cm}
\includegraphics[align=c]{images/233_terminals-51.mps}
&
\hspace{0.5cm}
&
\rlap{\raisebox{2.6\totalheight}{$F^2_2$}}
\hspace{0.4cm}
\includegraphics[align=c]{images/233_terminals-52.mps}
&
\hspace{0.5cm}
&
\rlap{\raisebox{2.6\totalheight}{$F^3_2$}}
\hspace{0.4cm}
\includegraphics[align=c]{images/233_terminals-53.mps}\\
\end{tabular}
\caption{$(2,3,3)$-terminals for $c\equiv 1,2\pmod{3}$ (face).}
\label{fig:233-terminals-F}
\end{figure}


\subsection{The Construction Algorithm}

The edge count of a dessin with passport~(\ref{eq:passport-b}) is odd and divisible by $3$, so $n\equiv 3,9\pmod{12}$ and $b+c\equiv 0\pmod{3}$. Write
$$
a = 2u+1, \quad b = 3v + 1, \quad c = 3w + 2, \quad \text{where} \quad u,v,w\ge 0.
$$
The Riemann--Hurwitz formula then gives
$$
2 = r + 1 + s + t + 2 - n = \frac{n-a}{2} + \frac{n}{3} + \frac{n-b-c}{3} + 3 - n = \frac{n-3}{6} - u - v - w + 2,
$$
$$
u+v+w = \frac{n-3}{6}.
$$
Consequently, $u+v+w$ is even when $n\equiv 3\pmod{12}$ and odd when $n\equiv 9\pmod{12}$.

Define the ``integer semiperimeter''
$$
p =
\begin{cases}
\displaystyle
\frac{u+v+w}{2}, & \text{for}\ n\equiv 3\pmod{12};\\[4pt]
\displaystyle
\frac{u+v+w-1}{2}, & \text{for}\ n\equiv 9\pmod{12}.
\end{cases}
$$

\begin{algorithm}
\label{algo:main2}
Input: a genus-$0$ passport of the form $(2^r,a \sep 3^s \sep 3^t,b,c)$ with $b\equiv 1\pmod{3}$, $c\equiv 2\pmod{3}$.
\begin{itemize}
\item[Step 1.] Compute $u=\lfloor a/2\rfloor$, $v=\lfloor b/3\rfloor$, $w=\lfloor c/3\rfloor$, and $p=\lfloor(u+v+w)/2\rfloor$.
\item[Step 2.] Construct the dessin using the appropriate figure:
{\setlength{\leftmargini}{2.5cm}
\begin{itemize}
\item[(T)] $u\le p$, $v\le p$, $w\le p$: Fig.~\ref{fig:233-generic2-T}, parameters $p-u$, $p-v$, $p-w$, $p-v+1$;
\item[(NT-3)] $w>p$: Fig.~\ref{fig:233-generic2-NT-3}, parameters $u$, $v$, $w-p$, $w-p-1$;
\item[(NT-2)] $u>p$: Fig.~\ref{fig:233-generic2-NT-2}, parameters $v$, $w$, $u-p-1$.
\end{itemize}
}
\item[Step 3.] In case NT-2, take the dual dessin.
\end{itemize}
\end{algorithm}

\begin{remark}
Our dessins have two faces whose degrees are not divisible by $3$: one outer face (whose center we may place at infinity) and one inner face. In Fig.~\ref{fig:233-generic2-T}$(a)$ the inner face has degree congruent to $1$ mod $3$, i.e.\ it is the face of degree $b$. In Fig.~\ref{fig:233-generic2-T}$(b)$ the inner face has degree congruent to $2$ mod $3$, i.e.\ it is the face of degree $c$.
\end{remark}

\begin{remark}
In the (NT-3) case, Algorithm~\ref{algo:main2} uses four distinct constructions, just as Algorithm~\ref{algo:main} does. In the (T) and (NT-2) cases, however, only two constructions are needed. This can be explained as follows.
\begin{enumerate}
\item
In case (T), the constructions in Fig.~\ref{fig:233-generic2-T}$(a)$ and $(b)$ do not require the outer face degree to exceed the inner face degree, so each picture actually covers two sub-cases: $b<c$ and $b>c$.
\item
In case (NT-2) we provide constructions for the dual dessin, which has two black vertices of degrees $b$ and $c$. Again, a single picture covers both $b<c$ and $b>c$.
\end{enumerate}
\end{remark}

\noindent
{\it Proof of correctness of Algorithm~\ref{algo:main2}.}
All parameters used by the algorithm in the respective constructions (such as $p-u$, $w-p-1$, etc.) are non-negative.
Thus the algorithm correctly produces a dessin.
Checking the degrees of vertices and faces confirms that the passport of the resulting dessin matches the given one.
Behold!\hfill $\square$

\begin{figure}[htp]
\centering
\begin{subfigure}[b]{0.45\textwidth}
\centering
\includegraphics{images/233_generic-403.mps}
\caption*{$(a)\ n\equiv 3\pmod{12}$}
\end{subfigure}
\begin{subfigure}[b]{0.45\textwidth}
\centering
\includegraphics{images/233_generic-409.mps}
\caption*{$(d)\ n\equiv 9\pmod{12}$}
\end{subfigure}
\caption{Dessins $(2^r,a \sep 3^s \sep 3^t,b,c)$: triangle case.}
\label{fig:233-generic2-T}
\end{figure}

\begin{figure}[htp]
\centering
\begin{subfigure}[b]{0.45\textwidth}
\centering
\includegraphics{images/233_generic-503.mps}
\caption*{$(a)\ n\equiv 3\pmod{12}$, $b < c$.}
\end{subfigure}
\begin{subfigure}[b]{0.5\textwidth}
\centering
\includegraphics{images/233_generic-513.mps}
\caption*{$(b)\ n\equiv 3\pmod{12}$, $b > c$.}
\vspace{0.06cm}
\end{subfigure}
\begin{subfigure}[b]{0.45\textwidth}
\centering
\includegraphics{images/233_generic-509.mps}
\caption*{$(c)\ n\equiv 9\pmod{12}$, $b < c$.}
\end{subfigure}
\begin{subfigure}[b]{0.5\textwidth}
\centering
\includegraphics{images/233_generic-519.mps}
\vspace{0.06cm}
\caption*{$(d)\ n\equiv 9\pmod{12}$, $b > c$}
\end{subfigure}
\caption{Dessins $(2^r,a \sep 3^s,b \sep 3^t,c)$: non-triangle case-3.}
\label{fig:233-generic2-NT-3}
\end{figure}

\begin{figure}[t]
\centering
\begin{subfigure}[b]{0.9\textwidth}
\centering
\includegraphics[align=c]{images/332_generic-603.mps}
\caption*{$(a)\ n\equiv 3\pmod{12}$}
\end{subfigure}
\begin{subfigure}[b]{0.9\textwidth}
\centering
\includegraphics[align=c]{images/332_generic-609.mps}
\caption*{$(b)\ n\equiv 9\pmod{12}$}
\end{subfigure}
\caption{Dessins $(2^r,a \sep 3^s \sep 3^t,b,c)$: non-triangle case-2 (dual dessins).}
\label{fig:233-generic2-NT-2}
\end{figure}

\section*{Acknowledgments}

The authors are grateful to Fedor Pakovich for valuable discussions and useful comments.
The research of E. Kreines was  supported by the ISF Grant  1092/22.


\begin{thebibliography}{99}

\bibitem{Adr-2007}
N. M. Adrianov,
On planes trees with a prescribed number of valency set realizations,
J. Math. Sci., 158 (2009), no.~1, 5--10.

\bibitem{AdrPakZvo-2020}
N. M. Adrianov, F. Pakovich, A. K. Zvonkin,
\textit{Davenport-Zannier Polynomials and Dessins d'Enfants},
AMS Mathematical Surveys and Monographs, {\bf 249}, Providence, RI (2020).

\bibitem{BarPetr-2024} F. Baroni, C. Petronio,
Solution of the Hurwitz problem with a length-2 partition,
Illinois J. Math., 68 (2024), no.~3, 493-–529.

\bibitem{Bel-1979}
G. V. Belyi,
On Galois extensions of a maximal cyclotomic field,
Math. USSR Izvestija, 14 (1980), no.~2, 247--256.

\bibitem{BCHS-1965}
B. J. Birch, S. Chowla, M. Hall, Jr., A. Schinzel,  On the difference $x^3-y^2$,
Norske Vid. Selsk. Forh. (Trondheim), 38 (1965), 65--69.

\bibitem{Caj-1910}
F. Cajori,
\textit{A History Of Elementary Mathematics With Hints On Methods Of Teaching},
pp. viii + 304. (New York: The Macmillan Company. London: Macmillan and Co., Ltd., 1896).
Russian translation with appendixes by I. Yu. Timchenko, Mathesis, Odessa, 1910.

\bibitem{Hur-1891}
A. Hurwitz,
Ueber Riemann'sche Fl\"achen mit gegebenen Verzweigungspunkten,
Math. Ann. 39 (1891), 1--60.

\bibitem{Gro-1984}
A. Grothendieck,
Sketch of a~programme (Esquisse d'un programme),
in: \textit{Geometric Galois Actions I} (L. Schneps, P. Lochak eds.),
London Math.\ Soc.\ Lecture Notes Series, 242,
Cambridge Univ.\ Press, Cambridge (1997), 5--48;
English translation: 243--283.

\bibitem{EKS-1984}
A. L. Edmonds, R. S. Kulkarni, R. E. Stong,
Realizability of branched coverings of surfaces,
Trans. Amer. Math. Soc., 282 (1984), 773--790.

\bibitem{KLN-2018}
J. König, A. Leitner, D. Neftin,
Almost-regular dessins d'enfant on a torus and sphere,
Topology and its Applications, 243 (2018), 78--99.

\bibitem{LanZvo-2004} S. K. Lando, A. K. Zvonkin, {\it Graphs on surfaces and their applications},
with an appendix by D. Zagier.
Encycl. of Math. Sciences, {\bf 141}, Springer, 2004.

\bibitem{MengWeiZhou-2026} Y. Meng, Z. Wei, C. Zhou,
Existence of branched covers $S^2\to S^2$ with prescribed branching data.
Topology and its Applications, 384 (2026), 109793.

\bibitem{Pak-2009}
F. Pakovich,
Solution of the Hurwitz problem for Laurent polynomials,
Journal of Knot Theory and Its Ramifications, 18 (2009), no.~2, 271--302.

\bibitem{Pak-2024}
F. Pakovich,
Hurwitz existence problem and fiber products,
arXiv:2408.10874.

\bibitem{PakZvo-2014}
F. Pakovich, A. K. Zvonkin,
Minimum degree of the difference of two polynomials over $Q$, and weighted plane trees,
Selecta Math. (N.S.), 20 (2014), no.~4, 1003--1065.

\bibitem{PascPetr-2012}
M. A. Pascali, C. Petronio, Branched covers of the sphere and the prime-degree conjecture,
Ann. Mat. Pura Appl., 191 (2012), 563--594.

\bibitem{PervPetr-2006}
E. Pervova, C. Petronio,
On the existence of branched coverings between surfaces with prescribed branch data. I,
Algebraic \& Geometric Topology, 6 (2006), 1957–1985.

\bibitem{PervPetr-2008}
E. Pervova, C. Petronio,
On the existence of branched coverings between surfaces with prescribed branch data. II,
Journal of Knot Theory and Its Ramifications, 17 (2008), 787–816.

\bibitem{Petr-2020} C. Petronio, The Hurwitz existence problem for surface branched covers,
Winter Braids Lecture Notes, 7 (2020), Course no II, 1--43.

\bibitem{Plo-2007}
K. Plofker, Mathematics in India in
\textit{The Mathematics of Egypt, Mesopotamia, China, India, and Islam: A Sourcebook}
/ ed. by V. J. Katz. -- Princeton : Princeton University Press, 2007, 385--514.

\bibitem{Plo-2009}
K. Plofker,
\textit{Mathematics in India}, 384 p.
Princeton: Princeton University Press, 2009.

\bibitem{Sto-1981}
W. W. Stothers, Polynomial identities and Hauptmoduln,
Quart. J. Math. Oxford, ser. 2, 32 (1981), no.~127, 349--370.

\bibitem{Thom-1965}
R. Thom,
L'\'equivalence d'une fonction diff\'erentiable et d'un polynome,
Topology, 3 (1965), suppl.~2, 297--307.

\bibitem{Tutte-1964}
W. T. Tutte,
Planted plane trees with a given partition,
Amer. Math. Monthly, 71:3 (1964), 272--277.

\bibitem{WeiWuXu-2024} Z. Wei, Y. Wu, B. Xu, A note on rational maps with three branching points on the Riemann sphere,
arXiv:2401.06956, 2024.

\bibitem{Zheng-2006}
H. Zheng,
Realizability of branched coverings of $S^2$,
Topology and its Applications, 153 (2006), 2124--2134.

\end{thebibliography}
\end{document}